\numberwithin{equation}{section}
\newtheorem{theorem}{Theorem}[section]
\newtheorem{lemma}[theorem]{Lemma}
\newtheorem{proposition}[theorem]{Proposition}
\newtheorem{corollary}[theorem]{Corollary}
\theoremstyle{definition}
\theoremstyle{remark}
\newcommand{\sch}{{\rm{Sch}_k}}
\newcommand{\C}{{\mathbb{C}}}
\newcommand{\Z}{{\mathbb{Z}}}
\newcommand{\F}{{\mathbb{F}}}
\newcommand{\N}{{\mathbb{N}}}
\newcommand{\A}{{\mathbb{A}}}
\newcommand{\Tr}{{\rm Trace}}
\newcommand{\CC}{{\mathcal{C}}}
\newcommand{\CS}{{\mathcal{S}}}
\newcommand{\CH}{{\mathcal{H}}}
\renewcommand{\ker}{{\rm{ker}}}
\newcommand{\tens}{\otimes}
\newcommand{\id}{\rm id}
\newcommand{\ev}{{\rm ev}}
\newcommand{\coev}{{\rm coev}}
\newcommand{\und}{\underline}
\begin{document}

\title{ON BRAIDED ZETA FUNCTIONS}
\keywords{Riemann hypothesis, algebraic geometry, motivic zeta function, finite field, quantum groups, $q$-deformation, renormalisation, braided category}
\subjclass[2000]{Primary 81R50, 58B32, 14G10}

\author{Shahn Majid \& Ivan Toma\v si\'c}
\address{Queen Mary University of London\\
School of Mathematical Sciences, Mile End Rd, London E1 4NS, UK}
\thanks{The 1st author was supported by a Senior Leverhulme Research Fellowship}
\email{s.majid@qmul.ac.uk, i.tomasic@qmul.ac.uk}


\begin{abstract}
We propose a braided approach to zeta-functions in $q$-deformed geometry, defining $\zeta_t$ for any rigid object in a ribbon braided category. We compute $\zeta_t(\C^n)$ where $\C^n$ is viewed as the standard representation in the category of modules of $U_q(sl_n)$ and $q$ is generic.  We show that this coincides with $\zeta_t(\C^n)$ where $\C^n$ is the $n$-dimensional representation in the category of $U_q(sl_2)$ modules and that this equality of the two braided zeta functions is equivalent to the classical Cayley-Sylvester formula for the decomposition into irreducibles of the symmetric tensor products $S^j(V)$ for $V$ an irreducible representation of $sl_2$. We obtain functional equations for the associated generating function. We also discuss $\zeta_t(C_q[S^2])$ for the standard $q$-deformed sphere.  \end{abstract}
\maketitle 

\section{Introduction} 

The analogue of the Riemann hypothesis in the case of the zeta function of an algebraic variety over a finite field was proven by Deligne in \cite{Deligne-Weil1} and \cite{Deligne-Weil2}, as a culmination of a body of work developed by the Grothendieck school. However, in quantum group theory there are suggested links and analogies between algebraic geometry over finite fields and $q$-deformations in characteristic zero both generically and at roots of unity. There are also `reduced' finite-dimensional but noncommutative versions of algebraic varieties which capture some of the classical geometry much as working over $\F_q$. This suggests that a $q$-deformed Riemann hypothesis based on $q$-deformed geometry should be possible to formulate and prove. The present work is intended as a step in this direction. Other key ingredients such as $q$-complex geometry will be considered elsewhere, following the model of $q$-deformed $\C P^1$ in \cite{Ma:sph}. Such a quantum groups approach should not be confused with a rather more sophisticated approach of Connes using operator algebra methods~\cite{Con}.

Let $\sch$ be the category of algebraic schemes over a fixed ground field $k$ and let $S$
be a commutative ring with identity. The motivic $\zeta$-function is defined with reference to a  `weak motivic measure' on $\sch$, meaning a map $\llbracket\cdot\rrbracket:\sch\to S$ such that \cite{krajicek-scanlon}: 

\begin{enumerate}
\item\label{norm} for a finite scheme $X$, $\llbracket X\rrbracket =|X|$;
\item\label{isom} if $X\simeq Y$, then $\llbracket X\rrbracket=\llbracket Y\rrbracket$;
\item\label{sum} $\llbracket X\coprod Y\rrbracket =\llbracket X\rrbracket +\llbracket Y\rrbracket $;
\item\label{prod} $\llbracket X\times Y\rrbracket=\llbracket X\rrbracket\llbracket Y\rrbracket$.
\end{enumerate}
In the literature one also considers stronger notions,  where condition (3) 
is replaced by an `excision' axiom and the last requirement can be strengthened by requiring that if  $f:X\to Y$ is a map such that for every $y\in Y$, 
$\llbracket f^{-1}(y)\rrbracket=\mu$, then $\llbracket X\rrbracket =\mu\llbracket Y\rrbracket $. However, for our purposes the listed features are sufficient.

Given a fixed  weak motivic measure $\llbracket\cdot\rrbracket$ on $\sch$, for every scheme $X$ one defines (following Kapranov \cite{kapranov}): \[ \zeta_t(X)=\sum_j \llbracket X^{(j)}\rrbracket t^j\ \ \ \in S[[t]],\]
where $X^{(j)}$ is the $j$-th symmetric power of $X$, i.e., $X^{(j)}=X^j/S_j$ and the action of $S_j$ is permutation of the factors of the direct product. 

If we specialise to the case where $k=\F_q$ is a finite field, and $\llbracket X\rrbracket=|X(\F_q)|$
is the number of points over the ground field of a given variety, we recover the classical Hasse-Weil zeta function. However, the motivic definition is the one which we shall consider as this is rather more general. We also recall for orientation that for $\A^n$, $\Bbb P^n$ respectively the  affine and projective spaces of dimension $n$ and $k=\F_q$, 
\[ \zeta_t(\A^n)=\left({1\over 1-qt}\right)^n,\quad \zeta_t(\Bbb P^n)=\prod_{i=0}^n{1\over 1-q^i t}.\]

Now let us consider how the motivic version might be generalised for noncommutative geometry.  Instead of a variety $X$ we might work with an algebra $A$, possibly noncommutative but regarded as if a coordinate ring. We should have a notion of `finite scheme' which may be a bit more than merely finite-dimensional. Then a `weak motivic measure' would be a map $\llbracket\cdot\rrbracket$ from a reasonable class of algebras to a (commutative) ring $S$ such that
\begin{enumerate}
\item\label{Anorm} if $A$ `finite', $\llbracket A\rrbracket =\dim(A)$;
\item\label{Aisom} if $A\simeq B$, then $\llbracket A\rrbracket=\llbracket B\rrbracket$;
\item\label{Asum} $\llbracket A\oplus B\rrbracket = \llbracket A\rrbracket +\llbracket B\rrbracket $;
\item\label{Aprod} $\llbracket A\tens B\rrbracket=\llbracket A\rrbracket\llbracket B\rrbracket$.
\end{enumerate}
In place of $X^j$ we could consider $A^{\tens j}$ as an algebra and in place of $X^{(j)}$ we could define $A^{(j)}=(A^{\tens j})^{S_j}\subset A^{\tens j}$  as invariant subalgebra under $S_j$. Here $S_j$ acts by permutation of tensor factors of $A^{\tens j}$ and one can check that this action respects the product of $A^{\tens j}$ so that this makes sense, even when $A$ is noncommutative. Then
 \begin{equation}\label{zeta} \zeta_t(A)=\sum_{j=0}^\infty t^j \llbracket A^{(j)}\rrbracket.\end{equation}
For a finite-dimensional algebra we could consider $\llbracket A\rrbracket =\dim(A)$ as the dimension of the left-regular representation, but in this case $\zeta_t(A)=(1/(1-t))^{\dim(A)}$ independently of the algebra structure of $A$. The issue then is how to define $\llbracket\cdot\rrbracket$ more generally. In noncommutative geometry the $\tens$ of algebras would also need to be an appropriate one and might be more general than the usual tensor product or completion thereof.

Our proposal in this paper is to go down a different route to `counting points', namely to use a version of the categorical rank or `braided dimension' $\und\dim$ of an object in a braided category to measure its `size'. Here $\und\dim(C_q[G])$  was used in  \cite{MaSoi2} to  `count' the points in the Drinfeld-Jimbo-type quantum group coordinate algebras $C_q[G]$, with results related to $L$-functions in number theory (more precisely, this was done in terms of the quantum enveloping algebras $U_q(g)$). The notion $\und\dim$, however, is not multiplicative and we propose, in the case of a ribbon braided category, to use a variant $\und\dim'(A)$ which is. Thus we take $\llbracket A\rrbracket=\und\dim'(A)$ on a reasonable class of objects. This may include a sum of finite-dimensional (rigid) objects in which case we extend $\und\dim'$ additively and, in $q$-deformed examples with $q$ generic will obtain a powerseries in $q$ (typically a rational function singular at $q=1$). We also have in mind a setting where $q$ is a root of unity and the coordinate algebra $A$ can be finite-dimensional. Computations in the root of unity setting are deferred to a sequel while in this note $q$ is  always taken to be generic. More details are in Section~3.

Next we define  $A^{(j)}$ in a way that  makes sense in the ribbon braided category for an object $A$. Remembering our motivation, we define $A^{(j)}$ essentially as the invariant subobject of $A^{\tens j}$ under all adjacent braidings. More precisely, for $q$-deformation quantum groups we replace the role of the symmetric group $S_j$ by  a $q$-Hecke algebra or its appropriate variant.  We then define $\zeta_t(A)$ as before by (\ref{zeta}).  Finally, if 
the category has direct sums and if we have the $\lambda$-ring property
\begin{equation}\label{lring} (A\oplus B)^{(j)}\cong \sum_{k,l\ge 0; k+l=j} A^{(k)}\tens B^{(l)}\end{equation}
as objects then multiplicativity of the braided dimension $\und\dim'$ implies that
\[ \zeta_t(A\oplus B)=\zeta_t(A)\zeta_t(B).\]
This holds in our $q$-deformed examples for generic $q$ because the tensor products and direct sums have the same form as classically. 

It is still the case that our constructions are for an object  $A$ in a ribbon category without requiring an algebra structure, but if both structures exist they will be connected through functoriality. Our point of view is that the usual $\zeta$-function has a dependence that can be viewed as factoring through certain geometrical constructions on the variety and it is envisaged that some of that is now reflected in the $q$-deformed case  as an object in a ribbon braided category, even if the variety itself (expressed in an algebra structure on $A$) is not being directly referenced.  It is also possible to interpret the braided $\zeta$-function in the $q$-deformed case for generic $q$ as a braided Hilbert series. We will explain this in Section~5.

In this note we show that our approach is `reasonable' by computing $\zeta_t(\C^n)$ where $\C^n$ will be regarded in two different ribbon braided categories. On the one hand we look at the braided finite geometry of `$n$ points' in the sense $A=\C^n$ equipped with a $U_q(sl_n)$-induced solution of the braid relations, and on the other hand the same but equipped with a $U_q(sl_2)$-solution. The relevant braiding matrices are different yet we find, remarkably, the same resulting $\zeta_t$. Our result is, moreover, similar to the classical $\zeta$-function for $\Bbb P^n$ cited above.  

We view the remarkable equality here is indicating that the $\zeta$-function is invariant under a certain `coherence of $q$-deformation'  between different quantum groups in the same family. In our case it is implied by (and can be used to deduce) the classical Cayley-Sylvester formula  for the classical decomposition into irreducibles of the symmetric tensor powers $S^j(V_m)$ for all $j$, where $V_m$ is the $m+1$-dimensional irreducible representation of $sl_2$. Here our $\zeta_t$ obey functional equations which we interpret as functional equations for the the $q$-generating functions
$c_m(t,q)$ of the multiplicities of the irreducibles in all  $S^j(V_m)$. This provides a new 
way of computing these multiplicities which may be of interest in classical
representation theory.

For completeness, we discuss the definition by similar methods of $\zeta_t(C_q[S^2])$, i.e. for the quantum sphere coordinate algebra. The issue here is the ordering of infinite sums and we compute  one such ordering to degree $t^3$. 

\subsection*{Acknowledgements} We thank Malek Abdesselam for pointing us to the classical Cayley-Sylvester formula used in Section~4. We also thank Thomas Prellberg for discussions and for computing the explicit formula after Lemma~4.6. 
 
\section{Classical $\zeta$-functions for finite sets}

We start with some elementary remarks in the finite case. Let $X$ be a finite set.  The number of orbits of $S_j$ acting on $X\times \cdots\times X$ is the number of partitions of $j$ into $|X|$ parts because for each element of $X$ we have some non-negative integer for the number of times it occurs and a total of $j$ occurrences. This number of partitions is  $\left({j+|X|-1\atop |X|-1}\right)$, which gives
\begin{equation}\label{zetaset} \zeta_t(X)=({1\over 1-t})^{|X|}.\end{equation}
We see that the answer has the same as for an affine space of dimension $n=|X|$ and $q=1$ (in keeping with a certain philosophy about geometry over $\F_1$).

Note that  $X^j$ has  points with nontrivial stabiliser under the action of $S_j$. If we ignore these points then the `regular points' of $X^{(j)}$ are the orbits of $j$-tuples with distinct entries. These are $\left({|X|\atop j}\right)$ in number. Hence
\[ \zeta^{\rm regular}_t(X)=(1+t)^{|X|}= [2]_t^{|X|}\]
where we use the notation $[m]_t=(1-t^m)/(1-t)=1+t+\cdots +t^{m-1}$.  This compares with the full number of orbits including degenerate ones above which in this notation is $[\infty]_t^{|X|}$. 

Now let us look at this from an algebraic point of view over a field $k$. Then $n$ points could be viewed as corresponding to $A=k^n$ (or any other $n$-dimensional algebra).  We are interested only in the dimension of the vector space
\[ (A^{\tens j})^{S_j}\]
of symmetric tensors. At least if $k$ has characteristic zero, this space  has dimension $\left({\dim(A)+j-1\atop j}\right)$ because it can be identified with the degree $j$ part of the symmetric algebra $S^j(A)$. Thus, at least in this case
\[ \zeta_t(A)=({1\over 1-t})^{\dim(A)}\]
 can be viewed as  the Hilbert series of $S(A)$ and is in accord with (\ref{zetaset}). This serves as a model for what follows.

\section{Braided dimension} 

Let $\CC$ be a braided category and let $V$ be a rigid object, i.e. with (say) left dual. This means morphisms $\ev_V: V^*\tens V\to \und 1$ and $\coev_V:\und 1\to V\tens V^*$ obeying some usual axioms. Here $\und 1$ is the identity object with respect to the $\tens$ product. Typically the category will be $k$-linear and $\und 1=k$ and in this case if $\{e_a\}$ is a basis of $V$ and $\{f^a\}$ a dual basis then we may take $\coev_V(\lambda)=\lambda\sum_a e_a\tens f^a$, while $\ev_V$ is the usual evaluation. The braiding means that for any two objects $V,W$  there is an isomorphism $\Psi_{V,W}:V\tens W\to W\tens V$ obeying the usual coherence and functoriality properties (as a natural transformation $\tens \to \tens^{\rm op}$). We refer to \cite[Chapter 9]{Ma:book} for an introduction. In this case there is a natural `rank' or `braided dimension'
\[ \und{\dim}(V)=\ev_V\circ\Psi_{V,V^*}\circ\coev_V\]
as a map $k\to k$, which we view as an element of $k$ acting by multiplication. There is an associator for the bracketting of  tensor products which is omitted here but should be understood (Mac Lane's coherence theorem says that we can insert the associator as needed for compositions to make sense and different ways to do this will give the same net result). One could use this braided dimension but it is not multiplicative when the category is strictly braided.

Next we recall that a ribbon structure on a rigid braided category is a natural isomorphism $\nu_V:V\to V$ (a coherent collection of isomorphisms) such that
\[ \nu_{V\tens W}=\Psi^{-1}_{V,W}\circ\Psi^{-1}_{W,V}\circ(\nu_V\tens \nu_W), \quad\nu_{\und 1}=\id,\quad\nu_{V^*}=(\nu_V)^*.\]
This allows the formation of right duals as well as the left ones assumed, which is relevant to applications in knot theory, but also allows the formation of $\und\dim'$ defined as a braided trace $\und\Tr$ of $\nu$ in \cite{Ma:book}
\[ \und\dim'(V)=\und\Tr(\nu_V)=\ev_V\circ\Psi_{V,V^*}\circ(\nu_V\tens\id)\circ\coev_V.\]
It is shown in \cite{Ma:book} that this is multiplicative for $\tens$. When the category has direct sums we again extend $\und\dim'$ to sums of rigid objects and we define 
\[ \llbracket A\rrbracket=\und\dim'(A).\]

For the standard quantum groups $U_q(g)$ associated to complex simple Lie algebras \cite{Dri} the category of finite-dimensional highest weight modules is a ribbon rigid braided category with direct sums. There are different formulations but we will work with generic $q$ over $k=\C$ and everything $k$-linear. For generic $q$ the objects, their tensor products and their decompositions into irreducibles follow the same pattern up to isomorphism as in the classical case and we will make use of this.  When we consider infinite sums we will be working in a slightly larger category.  For orientation purposes, for generic $q$ the standard $j+1$-dimensional representations $V_j$ of the quantum group $U_q(sl_2)$ are labelled as for the corresponding $sl_2$-representations by non-negative integers $j$. In these conventions,
\[ \und\dim(V_j)=q^{-{j(j+2)\over 2}}\und\dim'(V_j);\quad  \und\dim'(V_j)={q^{j+1}-q^{-(j+1)}\over q-q^{-1}}= (\dim(V_j))_q\]
where $(n)_q=(q^n-q^{-n})/(q-q^{-1})$ is the `symmetric $q$-integer' version of an integer $n$.

As a concrete application of these ideas to `counting points', consider  $C_q[S^2]$ defined as a $U(1)$-invariant subalgebra of $C_q[SU_2]$ under right translation (in the sense of a $U_q(su_2)$-module restricted to the maximal torus), see \cite{Ma:sph,Ma:book} for an exposition. The notation comes from the compact real form, but we will not be concerned about this explicitly and will work at the level of  $U_q(sl_2)$-modules. It is also known for generic $q$ that there is a Peter-Weyl decomposition $C_q[SU_2]=\oplus_{j\ge0} V_j\tens V_j^*$ with left and right translations on $V_j,V_j^*$ respectively, following the same form as classically. From this it follows that $C_q[S^2]=\oplus_{j\ge0,j\in 2\Z} V_j$ since only $V_j^*$ for even $j$  have an invariant element under the maximal torus. From this one may readily compute that 
\[ \und\dim(C_q[S^2])={1\over 1-q^{-2}}, \quad  \und\dim'(C_q[S^2])={2\over (1-q^{-2})(1-q^2)}.\]
Other $q$-deformation quantum groups and their homogeneous spaces can be treated similarly.  Although we are not aware of a formal treatment of this issue, one typically has reasonable functions $q$ in the numerator (such as $L$-functions \cite{MaSoi2}) and a pole at $q=1$. 

Now let $A$ be an object in a $k$-linear  braided category. In this case we have an action of the braid group $B_j$ on $A^{\tens j}$ via the braiding $\Psi$ between adjacent copies. The composition of these $\Psi$ generate a subalgebra $H_j$ of the endomorphisms of $A^{\tens j}$. This subalgebra has an action on $A^{\tens j}$  and we define
\[ A^{(j)}=(A^{\tens j})^{H_j}.\]
using an appropriate normalisation of the action of the generators in the strictly braided case. When $A$ is an algebra and the category symmetric  we obtain a subalgebra of the braided tensor product $A^{\und\tens j}$, but in general we will not.  When our braided category is ribbon  we define
\begin{equation}\label{braidedzeta} \zeta_t(A)=\sum_{j=0}^\infty t^j \und\dim'(A^{(j)}).\end{equation}

 \section{Braided zeta function of $\C^n$}

This section contains the main result of the paper, a computation of $\zeta_t(\C^n)$  in the ribbon braided category of finite-dimensional $U_q(sl_n)$-modules, i.e. in some sense for $n$ `braided points'. We  also find that this coincides with $\zeta_t(\C^n)$ in the ribbon category of finite-dimensional $U_q(sl_2)$-modules, where we regard $\C^n=V_{n-1}$ and we relate this to classical formulae in the representation theory for $sl_2$.

For $n=1$ the quantum group is trivial as are all $(\C^{\tens j})^{S_j}=\C$; clearly $\zeta_t(\C)=1/(1-t)$ as for a single point. 

For $n=2$ the classical picture is $((\C^2)^{\tens j})^{S_j}\cong S^j(\C^2)$ which has dimension $j+1$ and is given by the polynomials in $x,y$ of degree $j$. These are known to form the irreducible $j+1$-dimensional representation $V_j$. It is also known that dimensions and multiplicities do not change under deformation for generic $q$; $S_j$ becomes the standard $q$-Hecke algebra $\CH_{q,j}$ and $((\C^2)^{\tens j})^{\CH_{q,j}}$ is a $U_q(sl_2)$-module of the same dimension as classically, i.e. $V_{j}$ now as a $U_q(sl_2)$-module. Up to a normalisation the action of the abstract $q$-Hecke generators is by the braiding of adjacent copies as in Section~3. Hence the braided zeta function in the category of $U_q(sl_2)$-modules is
\[ \zeta_t(\C^2)=\sum_{j=0}^\infty t^j{q^{j+1}-q^{-(j+1)}\over q-q^{-1}}={1\over (1-qt)(1-q^{-1}t)}.\]

\begin{proposition}\label{zetacn} Regarding $\C^n$, $n\ge 2$ as a fundamental  $U_q(sl_n)$-module, for generic $q$, we have
\[\zeta_t(\C^n)=\prod_{j=-(n-1)\\ {\rm step} 2}^{n-1}{1\over 1-q^jt}.\]
\end{proposition}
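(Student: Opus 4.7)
The plan is to identify $(\C^n)^{(j)}$ with the $q$-deformed symmetric power $S^j_q(\C^n)$, and then compute its ribbon dimension as a principally specialised character. First, invoke the quantum Schur--Weyl duality for $U_q(sl_n)$: for generic $q$, there is a commuting action of $U_q(sl_n)$ and of the Hecke algebra $H_{q,j}$ on $(\C^n)^{\tens j}$, where the Hecke generators act (up to the rescaling of $\Psi$ alluded to in Section~3) via adjacent braidings and satisfy $(T-q)(T+q^{-1})=0$, producing the bimodule decomposition
\[(\C^n)^{\tens j}\cong\bigoplus_{\lambda\vdash j,\ \ell(\lambda)\le n}V_\lambda^{U_q(sl_n)}\tens S_\lambda^{H_{q,j}}.\]
Once the generators of $H_j$ are normalised so that $+1$ is the eigenvalue on the symmetric part of $\C^n\tens\C^n$, the braiding-invariant subspace is the isotypic component of the trivial Hecke representation ($\lambda=(j)$), giving $(\C^n)^{(j)}=V_{(j)}^{U_q(sl_n)}=S^j_q(\C^n)$.

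To evaluate $\und\dim'$ on this, recall that $\und\dim'$ on a $U_q(sl_n)$-module $V$ equals the classical formal character $\chi_V(x_1,\ldots,x_n)$ evaluated at the principal specialisation $x_i\mapsto q^{n+1-2i}$; this is consistent with $\und\dim'(\C^n)=[n]=q^{n-1}+q^{n-3}+\cdots+q^{-(n-1)}$ and, by multiplicativity and additivity of $\und\dim'$, determines $\und\dim'$ on the full Grothendieck ring. Since isotypic multiplicities are stable under generic deformation, the character of $S^j_q(\C^n)$ coincides with the classical complete symmetric polynomial $h_j(x_1,\ldots,x_n)$. Summing the series via the classical identity
\[\sum_{j\ge 0}h_j(x_1,\ldots,x_n)\,t^j=\prod_{i=1}^n\frac{1}{1-x_it}\]
and specialising $x_i=q^{n+1-2i}$ yields
\[\zeta_t(\C^n)=\prod_{i=1}^n\frac{1}{1-q^{n+1-2i}t},\]
which is the asserted product, since $\{n+1-2i:1\le i\le n\}=\{-(n-1),-(n-3),\ldots,n-3,n-1\}$.

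The main obstacle is the first step: one must verify that the normalisation of $\Psi$ implicit in the definition $A^{(j)}=(A^{\tens j})^{H_j}$ of Section~3 picks out the symmetric rather than the antisymmetric Hecke isotype, so that $(\C^n)^{(j)}$ is $S^j_q(\C^n)$ and not $\Lambda^j_q(\C^n)$ (the latter would terminate at $j=n$, producing a polynomial rather than the rational function asserted). Once this identification is pinned down, the remaining steps collapse to a well-known $q$-Hilbert series computation, consistent with the explicit $n=2$ case already displayed before the proposition.
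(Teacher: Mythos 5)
Your proposal is correct, and it shares the paper's essential first step --- identifying $(\C^n)^{(j)}$ for generic $q$ with the irreducible $U_q(sl_n)$-module of highest weight $j\omega_1$, i.e.\ the $q$-symmetric power, via the Hecke-algebra action of the adjacent braidings --- but it then diverges in how the quantum dimension is computed and summed. The paper evaluates $\und\dim'(V(j\omega_1))$ by the quantum Weyl dimension formula $\prod_{\alpha>0}((\alpha,\Lambda+\rho))_q/((\alpha,\rho))_q$, obtaining the symmetric $q$-binomial coefficient $\left({n+j-1\atop j}\right)_q$, and then proves $\sum_j t^j\left({n+j-1\atop j}\right)_q=\prod_j (1-q^jt)^{-1}$ by induction on $n$ (a symmetric $q$-analogue of the Newton binomial identity). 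You instead use that $\und\dim'$ is the formal character at the principal specialisation $x_i\mapsto q^{(2\rho,L_i)}=q^{n+1-2i}$, that the character of the $q$-symmetric power is the classical $h_j$, and the generating identity $\sum_j h_j t^j=\prod_i(1-x_it)^{-1}$; this reaches the product form in one step with no induction. Both are valid; your route is shorter and makes the product structure transparent, while the paper's route produces the explicit intermediate formula $\zeta_t(\C^n)=\sum_j t^j\left({n+j-1\atop j}\right)_q$, whose coefficient structure (coefficients of $q^{jm-2r}$ as partition numbers $p(r,j,m)$) is exploited later in the proof of Theorem~4.2. Your flagged caveat about normalising the Hecke generators so that the invariants are the symmetric rather than antisymmetric isotype is exactly the point the paper also treats by fiat (``an appropriate normalisation of the action of the generators''), so you are on the same footing there; it is resolved by requiring consistency with the classical limit $q\to 1$, where the invariants must be $S^j(\C^n)$.
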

\begin{proof} The classical picture is that  $((\C^n)^{\tens j})^{S_j}\cong S^j(\C^n)$ is an irreducible representation of $sl_n$ with highest weight $j\omega_1$. This corresponds to a Young diagram with just one row, of $j$ boxes. The corresponding Young symmetrizer is the total symmetrization in $j$ tensor powers, i.e. the image in $(\C^n)^{\tens j}$ is our desired space of invariants. In the $q$-deformed case we will define braided-symmetrization as  $((\C^n)^{\tens j})^{\CH_{q,j}}$, the space of invariants under the appropriate $q$-Hecke algebra and for generic $q$ this will be the corresponding irreducible representation of $U_q(sl_n)$. Now for all the standard quantum groups associated to complex simple Lie algebras $g$ the usual braided dimension and the multiplicative one of the highest weight representation $V(\Lambda)$ are
\[ \und\dim(V(\Lambda))=q^{-(\Lambda,\Lambda+2\rho)}\und\dim'(V(\Lambda));\quad \und\dim'(V(\Lambda))=\prod_{\alpha>0}{\left((\alpha,\Lambda+\rho)\right)_q\over \left((\alpha,\rho)\right)_q}\]
see \cite{MaSoi2}. We use the {\em symmetric} $q$-integers $(m)_q=(q^m-q^{-m})/(q-q^{-1})$ as in Section~4. Here $\alpha$ are positive roots and $\rho={1\over 2}\sum_{\alpha>0}\alpha=\sum_{i=1}^l\omega_i$ where $l={\rm rank}(g)$ and $\{\omega_i\}$ are the basis of the weight lattice defined by $(\omega_i,\check{\alpha}_j)=\delta_{i,j}$. Note that $(\alpha,\rho)$ is the height or the number of simple roots making up $\alpha$. Here $\check\alpha_i=2\alpha_i/(\alpha_i,\alpha_i)$ are the coroots. We compute this for $sl_n$ for our particular $\Lambda=j\omega_1$. 

Note that the standard normalisation for $sl_n$, which we use, is with the simple roots all with $(\alpha_i,\alpha_i)=2$. Then $\check{\alpha}_i=\alpha_i$. We also chose a standard ordering of the positive roots which begins $\alpha_1,\alpha_1+\alpha_2,....,\alpha_1+\alpha_2+\cdots+\alpha_{n-1}$. Then similarly $\alpha_2,\alpha_2+\alpha_3,\cdots,\alpha_2+\cdots+\alpha_{n-1}$, and so on. Now, in the product we will only have factors from $\alpha$ containing $\alpha_1$, i.e. from the first $n-1$ in this ordering:
\[\begin{split}\und\dim'(V(j\omega_1))&=\prod_{\alpha>0}{\left((\alpha,j\omega_1+\rho)\right)_q\over \left((\alpha,\rho)\right)_q}=\prod_{i=1}^{n-1}{\left((\alpha_1+\cdots+\alpha_i,j\omega_1+\rho)\right)_q\over \left((\alpha_1+\cdots+\alpha_i,\rho)\right)_q}\\
&=\prod_{i=1}^{n-1}{q^{j+i}-q^{-(j+i)}\over q^i-q^{-i}}=\left({n+j-1\atop j}\right)_q\end{split}\]
where we use $q$-binomial coefficients defined by the symmetric $q$-integers. Hence
\[ \zeta_t(\C^n)=\sum_{j=0}^\infty t^j \left({n+j-1\atop j}\right)_q\]
as a $q$-deformation of the standard Hilbert-series of a vector space of dimension $n$. We now have to compute this, which we do by induction on $n$ with the $n=2$ case proven already (one could also start with $n=1$ with care). Suppose the result for $n-1$ and write $\zeta_t(\C^m)\equiv \zeta_m(t)$. Then
\[ \begin{split}\zeta_t(\C^n)&=\sum_{j=0}^\infty t^j\left({q^{j+n-1}-q^{-(j+n-1)}\over q^{n-1}-q^{-(n-1)}}\right)\prod_{i=1}^{n-2}{q^{j+i}-q^{-(j+i)}\over q^i-q^{-i}}\\
&={q^{n-1}\zeta_{n-1}(qt)-q^{-(n-1)}\zeta_{n-1}(q^{-1}t)\over q^{n-1}-q^{-(n-1)}}  \\
&= { q^{n-1}\prod^{n-2}_{j=-(n-2) {\rm step} 2}{1\over 1-q^{j+1}t}-q^{-(n-1)}\prod_{j=-(n-2) {\rm step} 2}^{n-2}{1\over 1-q^{j-1}t}\over q^{n-1}-q^{-(n-1)}} \\
&={\left({q^{n-1}\over 1-q^{n-1}t}-{q^{-(n-1)}\over 1-q^{-(n-1)}t}\right)\over q^{n-1}-q^{-(n-1)}}\prod_{j=-(n-3) {\rm step} 2}^{n-3}{1\over 1-q^jt}\\
&={1\over (1-q^{n-1}t)(1-q^{-(n-1)}t)}\prod_{j=-(n-3) {\rm step} 2}^{n-3}{1\over 1-q^jt}\end{split}\]
as required.  This can be viewed as a symmetric $q$-integer version of the Newton formula for $q$-binomials and is presumably known to experts on $q$-series. \end{proof}

It would be interesting (but significantly harder) to compute the result for $q$ a root of unity. We also note the apparent similarity with the weight structure of the $n$-dimensional representation of $sl_2$ under the maximal torus.  This and the general philosophy that deformation in the $A$-series involves replacing integers by $q$-integers, suggests the following theorem. It expresses a kind of invariance of $\zeta_t$  within the same quantum group family.

\begin{theorem}\label{conj} For generic $q$ the braided $\zeta_t(\C^n)$ for $\C^n$, $n\ge 1$, regarded as the $n$-dimensional irreducible  $U_q(sl_2)$-module $V_{n-1}$, is given by the same formula as in Proposition~\ref{zetacn}\end{theorem}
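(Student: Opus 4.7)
My strategy is to reduce the theorem to the classical character theory of the symmetric algebra $\mathrm{Sym}(V_{n-1})$ viewed as an $sl_2$-module. The key numerical coincidence I exploit is that, for each $U_q(sl_2)$-irreducible $V_k$, one has $\und\dim'(V_k) = (k+1)_q = q^k + q^{k-2} + \cdots + q^{-k}$, which is simultaneously the classical $sl_2$-character of $V_k$ evaluated at the maximal-torus element $\mathrm{diag}(q, q^{-1})$. By additivity of $\und\dim'$ over direct sums, this means that whenever a finite-dimensional $U_q(sl_2)$-module $M$ decomposes as $\bigoplus_k c_k V_k$, the braided dimension $\und\dim'(M)$ coincides with the classical $sl_2$-character $\chi_M(q)$ of the module with the same multiplicities.

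Next I would argue that for generic $q$ the braided symmetric power $V_{n-1}^{(j)}$ in the $U_q(sl_2)$-category is a flat $q$-deformation of the classical $S^j(V_{n-1})$, so that it decomposes into irreducibles $V_k$ with the same multiplicities as the Cayley--Sylvester decomposition of $S^j(V_{n-1})$ as an $sl_2$-module. The reason is that at $q = 1$ the braiding is the ordinary flip and the subalgebra $H_j$ is the image of $\C[S_j]$, so $V_{n-1}^{(j)}|_{q=1} = S^j(V_{n-1})$; because the braidings commute with the $U_q(sl_2)$-action and the isotypic decomposition of $V_{n-1}^{\otimes j}$ is constant in $q$ for generic $q$ (by semisimplicity), the multiplicities of each $V_k$ inside the invariant subspace must coincide with the classical ones.

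Combining these two observations gives
\[\zeta_t(V_{n-1}) = \sum_{j \ge 0} t^j\, \und\dim'(V_{n-1}^{(j)}) = \sum_{j \ge 0} t^j\, \chi_{S^j(V_{n-1})}(q),\]
i.e.\ the bigraded character of the classical symmetric algebra $\mathrm{Sym}(V_{n-1})$ in the degree variable $t$ and the torus variable $q$. Applying the standard character identity
\[\sum_{j \ge 0} t^j\, \chi_{S^j(V)}(q) = \prod_{\lambda} \frac{1}{1 - t q^\lambda},\]
where $\lambda$ ranges over the weights of $V$, and recalling that the weights of $V_{n-1}$ under the maximal torus are $n-1, n-3, \ldots, -(n-1)$, one obtains exactly the product formula of Proposition~\ref{zetacn}.

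The main obstacle is the second paragraph: pinning down that the invariants of $V_{n-1}^{\otimes j}$ under the normalised adjacent braidings in the $U_q(sl_2)$-category really do produce the $q$-deformation of $S^j(V_{n-1})$. Unlike the $U_q(sl_n)$ situation of Proposition~\ref{zetacn}, where $\Psi$ on $\C^n \otimes \C^n$ has only two eigenvalues and the Hecke idempotent neatly selects the symmetric part, the braiding on $V_{n-1} \otimes V_{n-1}$ has up to $n$ distinct eigenvalues, one per summand of $V_{n-1} \otimes V_{n-1} = V_{2n-2} \oplus V_{2n-4} \oplus \cdots \oplus V_0$, so the "appropriate normalisation" picking out a well-defined symmetric subspace requires careful specification. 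Once that normalisation is fixed so that its $q \to 1$ limit recovers the usual symmetrisation, semisimplicity and the flatness of the deformation for generic $q$ should complete the argument.
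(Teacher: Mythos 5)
Your proposal is correct, but it takes a genuinely different route from the paper. The paper proves the theorem by invoking the classical Cayley--Sylvester formula $S^j(V_m)=\oplus_{r}V_{jm-2r}^{\oplus(p(r,j,m)-p(r-1,j,m))}$, identifying the partition numbers $p(r,j,m)$ with coefficients of Gaussian binomials, packaging everything into the generating function $c_m(t,q)$, and then showing via Lemma~4.3 that the positive- and negative-power parts of the resulting Laurent series reassemble into the product of Proposition~4.1. You instead observe that $\und\dim'(V_k)=(k+1)_q=q^k+q^{k-2}+\cdots+q^{-k}$ is precisely the classical $sl_2$-character of $V_k$ at ${\rm diag}(q,q^{-1})$, so that by additivity $\und\dim'$ of any module equals its classical character there, and then apply the Molien-type identity
\[
\sum_{j\ge 0}t^j\,{\rm tr}\bigl(g|_{S^jV}\bigr)=\frac{1}{\det(1-tg|_V)}=\prod_{\lambda}\frac{1}{1-tq^{\lambda}}
\]
over the weights $\lambda=n-1,n-3,\dots,-(n-1)$ of $V_{n-1}$. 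This is cleaner and more conceptual, and it bypasses Cayley--Sylvester entirely --- indeed it amounts to the standard generating-function proof of that formula, which is consistent with the paper's remark that the theorem is equivalent to Cayley--Sylvester; the paper's longer route is chosen precisely to exhibit that equivalence explicitly and to extract the functional equations for $c_m(t,q)$ in the subsequent corollaries, which your argument does not directly produce. Both arguments rest on the same unproved input, namely that for generic $q$ the braided symmetric power $V_{n-1}^{(j)}$ (invariants under the suitably normalised adjacent braidings) decomposes into $U_q(sl_2)$-irreducibles with the same multiplicities as the classical $S^j(V_{n-1})$; the paper simply asserts this ("dimensions and multiplicities do not change under deformation for generic $q$"), while you flag it as the main obstacle and sketch a flatness/semisimplicity argument, so you are at least as careful on this point as the source.
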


This time the classical picture is quite different. We let $n=m+1$ from now on, so $\C^n=V_m$ as an $sl_2$-module. This time  $S^j(V_m)$ will in general not be an irreducible representation of $sl_2$. Rather, let
\[ S^j(V_m)=\oplus_{p\ge 0} c_m{}^j{}_p V_p,\quad c_m{}^j{}_p\in\N\cup\{0\}.\]
It is known that \cite{FulHar}
\[ S^j(V_0)=V_0,\quad S^j(V_1)=V_j,\quad S^j(V_2)=V_{2j}\oplus V_{2j-4}\oplus\cdots\oplus V_{2j-4\lfloor j/2\rfloor}\]
where $\lfloor\ \rfloor$ denotes integer part.  The general case $m>2$ is more complicated but there is a classical formula of Cayley-Sylvester 
\begin{equation}\label{CS}
S^j(V_m)=\oplus_{r=0}^{\lfloor{jm\over 2}\rfloor}V_{jm-2r}^{\oplus(p(r,j,m)-p(r-1,j,m))}\end{equation}
in terms of the number $p(r,j,m)$ of partitions of $r$ into at most $j$ parts each $\le m$, see for example \cite{Spr}. We shall use this formula to prove the theorem. Conversely, invariance of the braided $\zeta$-function expressed in the theorem provides a new point of view on this classical result and one which we expect to apply more widely.

Our first step is to introduce generating functions. We define
\[ c_m(t,q)=\sum_{j,p} c_m{}^j{}_p\, t^j q^p=\sum_p c_m(t)_pq^p.\]
So, the low cases appear now as
\[ c_0(t,q)={1\over 1-t},\quad c_1(t,q)={1\over 1-qt},\quad c_2(t,q)= {1\over (1-t^2)(1-q^2t)}.\]
The following lemma relates these to the braided $\zeta$-functions.
\begin{lemma} 
\[ \zeta_t(V_m)={q c_{m}(t,q)- q^{-1} c_{m}(t,q^{-1})\over q-q^{-1}}.\]
\end{lemma}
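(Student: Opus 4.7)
The plan is essentially a direct unpacking of definitions: the braided $\zeta$-function of $V_m$ is, by (\ref{braidedzeta}), a generating series in $t$ whose coefficients are the braided dimensions $\und\dim'$ of the braided symmetric powers $V_m^{(j)}$, and the right-hand side of the lemma is visibly a specific combination of the generating function $c_m(t,q)$ that encodes the multiplicities of irreducibles in the classical $S^j(V_m)$. So the strategy is to identify $V_m^{(j)}$ with the $q$-analogue of $S^j(V_m)$, expand $\und\dim'$ additively, and then rearrange a double sum.

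First I would recall that for generic $q$ the object $V_m^{(j)}=((V_m)^{\tens j})^{\CH_{q,j}}$ decomposes as a $U_q(sl_2)$-module in exactly the same pattern as the classical $S^j(V_m)$ decomposes as an $sl_2$-module; this is the standard statement that multiplicities in tensor products and their isotypic refinements are deformation-invariant for generic $q$. Hence $V_m^{(j)}\cong \bigoplus_p c_m{}^j{}_p V_p$ in the ribbon category of $U_q(sl_2)$-modules, with the same nonnegative integers $c_m{}^j{}_p$ as in the classical Cayley-Sylvester setup from (\ref{CS}).

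Next, using additivity of $\und\dim'$ on direct sums together with the formula $\und\dim'(V_p)=(p+1)_q=(q^{p+1}-q^{-(p+1)})/(q-q^{-1})$ recalled in Section~3, I would compute
\[\und\dim'(V_m^{(j)})=\sum_p c_m{}^j{}_p\,\frac{q^{p+1}-q^{-(p+1)}}{q-q^{-1}}.\]
Substituting this into (\ref{braidedzeta}) gives
\[\zeta_t(V_m)=\frac{1}{q-q^{-1}}\sum_{j,p} c_m{}^j{}_p\, t^j\bigl(q\cdot q^p-q^{-1}\cdot q^{-p}\bigr).\]

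Finally, splitting the double sum and interchanging the order of summation (legitimate as formal power series in $t$ with coefficients in $\Z[q,q^{-1}]$), the first piece is $q\sum_{j,p}c_m{}^j{}_p t^j q^p=q\,c_m(t,q)$ and the second is $q^{-1}\sum_{j,p}c_m{}^j{}_p t^j q^{-p}=q^{-1}c_m(t,q^{-1})$, giving the claimed identity. There is no real obstacle here; the only care needed is in justifying that the $q$-deformed and classical multiplicities agree, which is the input of generic-$q$ semisimplicity, and in checking that the formal double series manipulation is valid, which is immediate since for each fixed $j$ the inner sum over $p$ is finite.
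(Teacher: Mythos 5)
Your proof is correct and follows essentially the same route as the paper's: expand $\und\dim'(V_m^{(j)})$ via the multiplicities $c_m{}^j{}_p$ and the formula $\und\dim'(V_p)=(p+1)_q$, then regroup the double sum into $q\,c_m(t,q)-q^{-1}c_m(t,q^{-1})$ over $q-q^{-1}$. The paper simply states this more tersely, taking the generic-$q$ identification of $V_m^{(j)}$ with the classical $S^j(V_m)$ decomposition as already established in Sections~3--4.
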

\proof From the above and Section~3 we have immediately 
 \[ \zeta_t(V_m)=\sum_j t^j \und\dim'(S^j(V_m))=\sum_{j,p} t^j c_m{}^j{}_p \und\dim'(V_p)=\sum_{j,p} t^j c_m{}^j{}_p{q^{p+1}-q^{-(p+1)}\over q-q^{-1}}\]
from which the stated formula follows.  \endproof 

\proof (Of Theorem~4.2). The partition number $p(r,j,m)$ occurs in the theory of the cohomology of Grassmannians $Gr_m(j+m)$ and in that context it is known that it can be viewed as the coefficient of $q^r$ in an asymmetric $q$-binomial coefficient defined by $[n]_q=(1-q^n)/(1-q)$. We recast this as the coefficient of $q^{-2r}$ in a binomial defined by $[n]_{q^{-2}}$ and hence the coefficient of $q^{jm-2r}$ in the symmetric $q$-binomial $\left({j+m\atop m}\right)_q$. The powers in this range from $q^{jm}$ to $q^{-jm}$ in steps of 2. In view of these observations, we can write
\[ c_m(t,q)=\sum_{j=0}^\infty t^j\sum_{r=0}^{\lfloor {jm\over 2}\rfloor}q^{jm-2r}(p(r,j,m)-p(r-1,j,m))\]
\[\quad \quad=(1-q^{-2})\sum_{j=0}^\infty t^j\left({j+m\atop m}\right)^+_q+q^{-2} \sum_{j=0}^\infty t^j \left({j+m\atop m}\right)^0_q\]
where $(\ )^+$ denotes the part with non-negative powers of $q$ while $(\ )^0$ denotes the term with lowest power (i.e. $q^0$ when $jm$ is even and $q^1$ when $jm$ is odd). Doing the sums as in the proof of Proposition~4.1 we obtain
\[ c_m(t,q)=(1-q^{-2})\zeta_{t,m}(q)^++q^{-2}\zeta_{t,m}(q)^0\]
where $\zeta_{t,m}(q)$ is the expression in Proposition~4.1 for $n=m+1$. We then use the lemma to find
\[ \zeta_t(V_m)={(q-q^{-1})\zeta_{t,m}(q)^++q^{-1}\zeta_{t,m}(q)^0-(q^{-1}-q)\zeta_{t,m}(q)^--q\zeta_{t,m}(q)^{-0}\over q-q^{-1}}=\zeta_{t,m}(q)\]
where we used that $\zeta_{t,m}$ is symmetric in $q,q^{-1}$. Here $(\ )^-$ denotes the part with non-positive powers of $q$ and $(\ )^{-0}$ denotes the term with $q^0$ or $q^{-1}$ as $jm$ is even or odd. In the even case the $\zeta_{t,m}(q)^0$ and $\zeta_{t,m}(q)^{-0}$ are equal and cancel the duplication in degree 0. In the odd case they cancel with each other. \endproof

The expression found for $c_m(t,q)$ in the course of the proof, while equivalent to the Cayley-Sylvester formula, is not very illuminating. We now turn to other ways to obtain $c_m(t,q)$ as a consequence of Theorem~4.2. First, in the same spirit, we can also recover $c_m(t,q)$ from
\[ (q-q^{-1})\zeta_t(V_m)= q c_m(t,q) - q^{-1}c_m(t,q^{-1})\]
since the first term on the right has only strictly positive powers of $q$ while the 2nd term has only strictly negative powers. For example, if $m=2s$  then, $(1-t)q c_m(t,q)$ is the truncation to strictly positive powers of $q$ in the expression
\[ (q-q^{-1})\prod_{i=1}^s{1\over (1-q^it)(1-q^{-i}t)}=(q-q^{-1})\sum_{\{k_i\ge 0\},\{j_i\ge 0\}} q^{2\alpha(k,j)}t^{\sum_i (k_i+j_i)}\]
where $\alpha(k_1,\cdots,k_s,j_1,\cdots, j_s)=\sum_{i=1}^s i(k_i-j_i)$. Hence
\[ (1-t)qc_m(t,q)=q\sum_{k,j; \alpha=0}t^{\sum (k+j)}+ (q-q^{-1})\sum_{k,j;\alpha\ge 1}q^{2\alpha}t^{\sum(k+j)}.\]
We can also deduce an inductive formula as follows.

\begin{corollary}\label{cmfromzeta}
\begin{eqnarray*} c_m(t,q)&=& {c_{m-2}(t,q)\over (1-q^mt)(1-q^{-m}t)} -{1\over 1-t^2}({q^{-m}t\over 1-q^{-m}t}\sum_p c_{m-2}(t)_pq^p(q^{-m}t)^{\lfloor p/m\rfloor} \\
&&\kern130pt+{q^{-2}\over 1-q^m t}\sum_p c_{m-2}(t)_p q^{-p}(q^m t)^{\lceil(p+2)/m\rceil})\end{eqnarray*}
where  $\lceil\ \rceil$ denotes the smallest integer greater than or equal to the enclosed quantity.
\end{corollary}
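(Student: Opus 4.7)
The plan is to combine the closed-form expression of Theorem~\ref{conj} with Lemma~4.3 to derive a recursion for $c_m(t,q)$, and then extract $c_m(t,q)$ itself by exploiting the fact that it contains only non-negative powers of $q$.

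First I would split off the $j = \pm m$ factors from the product in Proposition~\ref{zetacn} to obtain the recursion $\zeta_t(V_m) = \zeta_t(V_{m-2}) / ((1-q^m t)(1-q^{-m}t))$. Applying Lemma~4.3 on both sides converts this into
\[
q c_m(t,q) - q^{-1} c_m(t,q^{-1}) = \frac{q c_{m-2}(t,q) - q^{-1} c_{m-2}(t,q^{-1})}{(1-q^m t)(1-q^{-m}t)}.
\]
For each fixed power of $t$, $c_m(t,q)$ is a polynomial in $q$ with only non-negative powers, so $q c_m(t,q)$ is supported in powers $q^s$ with $s \geq 1$ while $q^{-1} c_m(t,q^{-1})$ is supported in $s \leq -1$. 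These supports are disjoint, so $q c_m(t,q)$ is recovered as the strictly positive part in $q$ of the right-hand side.

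To compute that positive part I would expand $1/((1-q^m t)(1-q^{-m}t)) = \sum_{k,l \geq 0} q^{m(k-l)} t^{k+l}$, regroup by $i = k-l$ and $s = \min(k,l)$ (so the $s$-sum yields a factor $1/(1-t^2)$), then impose the positivity condition on the $q$-exponent. Writing $c_{m-2}(t,q) = \sum_p c_{m-2}(t)_p q^p$, the condition from the contribution $q^{p+1}$ of $qc_{m-2}(t,q)$ becomes $i \geq -\lfloor p/m\rfloor$, and from $q^{-p-1}$ of $q^{-1}c_{m-2}(t,q^{-1})$ it becomes $i \geq \lceil (p+2)/m\rceil$.

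Finally, to recognise the main term I would use the identity
\[
\frac{1}{1-q^m t} + \frac{q^{-m}t}{1-q^{-m}t} = \frac{1-t^2}{(1-q^m t)(1-q^{-m}t)},
\]
which represents the \emph{unrestricted} sum $\sum_{i \in \mathbb{Z}} q^{mi} t^{|i|}$ divided by $1-t^2$. Subtracting the finite tail $\sum_{i=-\lfloor p/m\rfloor}^{-1}$ from this unrestricted sum splits the first contribution into the leading piece $c_{m-2}(t,q)/((1-q^m t)(1-q^{-m}t))$ plus a correction involving $(q^{-m}t)^{\lfloor p/m\rfloor}$, while the cutoff in the second contribution gives the term with $(q^m t)^{\lceil(p+2)/m\rceil}$ as a single geometric series. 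Dividing the whole expression by $q$ then yields the stated formula. The only real obstacle is careful bookkeeping of the floor and ceiling bounds and checking the empty-sum boundary cases (e.g.\ $\lfloor p/m\rfloor = 0$), which is routine.
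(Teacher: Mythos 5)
Your proposal is correct and follows essentially the same route as the paper: apply Lemma~4.3 to the recursion $\zeta_t(V_m)=\zeta_t(V_{m-2})/((1-q^mt)(1-q^{-m}t))$, use the disjoint supports in $q$ to identify $qc_m(t,q)$ with the strictly positive part, and evaluate the constrained geometric sums. One small slip in your description: the correction term $(q^{-m}t)^{\lfloor p/m\rfloor+1}/(1-q^{-m}t)$ arises by subtracting from the unrestricted sum the \emph{infinite} tail $\sum_{i\le -\lfloor p/m\rfloor-1}$ (equivalently, adding the finite piece $\sum_{i=-\lfloor p/m\rfloor}^{-1}$ to the $i\ge0$ part), not by subtracting the finite tail as written; the resulting formula is nevertheless the stated one.
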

\proof From Theorem~4.2 and the form of $\zeta_t$ in Proposition~\ref{zetacn} we have
\[ (q-q^{-1})\zeta_t(V_m)={(q-q^{-1})\zeta_t(V_{m-2})\over(1-q^m t)(1-q^{-m}t)}={q c_{m-2}(t,q)-q^{-1}c_{m-2}(t,q^{-1})\over (1-q^m t)(1-q^{-m}t)}.\]
We expand the right hand side denominators and pick out the part which has strictly positive powers of $q$, thus
\begin{eqnarray*} qc_m(t,q)&=&\sum_{i,j,p\ge 0, (i-j)m+p+1>0}c_{m-2}(t)_p t^{i+j}q^{(i-j)m+p+1}\\
&&-\sum_{i,j,p\ge 0,(i-j)m-p-1>0}c_{m-2}(t)_p t^{i+j}q^{(i-j)m-p-1}
\end{eqnarray*}
There is no $q^0$ contribution as the two terms in the numerator cancel in this degree. The first sum is constrained by $(j-i)\le p/m$ and we split this into two parts, one where $i$ is replaced in favour of $k=i-j\ge 0$ and the other where $j$ is replaced by $k=j-i$ is in the range $0< k\le p/m$. The second sum is also a geometric series, for $k=j-i\ge (p+2)/m$.  Thus
\begin{eqnarray*} qc_m(t,q)&=&{qc_m(t,q)\over(1-t^2)(1-q^mt)}+\sum_{p,i\ge 0}\sum_{0<k\le {p\over m}}c_{m-2}(t)_p t^{2i+k}q^{-km+p+1}\\
&&-\sum_{p,i\ge 0, k\ge{p+2\over m}}c_{m-2}(t)_p t^{2i+k}q^{km-p-1}\end{eqnarray*}
leading to the formula stated on doing the $i$ and $k$ sums. Note that all denominators of rational functions in $t$ should be understood as power series in $t$. 
 \endproof

It  is a useful and nontrivial check, which we leave to the reader,  that applying this inductive formula to $c_0$ gives $c_2$.

\begin{corollary} \label{c3c4}
\[ c_3(t,q)={1-qt+q^2 t^2\over (1-t^4)(1-qt)(1-q^3 t)},\quad c_4(t,q)={1-q^2t+q^4t^2 \over(1-t^2)(1-t^3)(1-q^2t)(1-q^4t)}.\]
\end{corollary}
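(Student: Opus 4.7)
The plan is to apply Corollary 4.4 twice: once with $m=3$ (using the base case $c_1(t,q)=1/(1-qt)$) and once with $m=4$ (using $c_2(t,q)=1/((1-t^2)(1-q^2t))$). In each case the inductive formula reduces to two finite-by-residue sums of geometric series, which assemble into the claimed compact rational expressions.

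For $m=3$ we have $c_1(t)_p=t^p$, so the two tail sums in Corollary 4.4 become
\[ \sum_p t^p q^p (q^{-3}t)^{\lfloor p/3\rfloor},\quad \sum_p t^p q^{-p}(q^3 t)^{\lceil (p+2)/3\rceil}. \]
Splitting $p=3k+r$ with $r\in\{0,1,2\}$ collapses each sum to a geometric series in $t^4$. A brief calculation yields $(1+qt+q^2t^2)/(1-t^4)$ and $(q^3 t+q^2 t^2+q^4 t^4)/(1-t^4)$ respectively. Substituting into Corollary 4.4 and combining over the common denominator $(1-q^{-3}t)(1-q^3t)$ in the bracketed term reveals a factor of $(1-t^2)$ which cancels against $1-t^2$ outside. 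The remaining combination simplifies to the stated $c_3(t,q)$.

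For $m=4$ we use $c_2(t)_p=t^{p/2}/(1-t^2)$ for $p$ even and $0$ otherwise, so the two sums run only over even $p=2j$. Writing $j=2k+\epsilon$ with $\epsilon\in\{0,1\}$ (so that $\lfloor p/4\rfloor=k$) again collapses each sum to a geometric series, this time in $t^3$. One gets $(1+q^2t)/((1-t^2)(1-t^3))$ and $(q^4t+q^2t^2)/((1-t^2)(1-t^3))$. Substituting into Corollary 4.4, the bracketed expression simplifies because the cross-term cancellation produces a factor $(q^{-4}+q^2)t(1-t^2)$, which again kills one copy of $(1-t^2)$. Putting everything over the common denominator $(1-t^2)(1-t^3)(1-q^2t)(1-q^4t)(1-q^{-4}t)$, the numerator becomes
\[ 1-(q^{-4}+q^2)t+(q^{-2}+q^4)t^2-t^3, \]
which factors as $(1-q^2t+q^4t^2)(1-q^{-4}t)$; cancelling $1-q^{-4}t$ yields the claim.

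The only potential obstacle is bookkeeping: the floor/ceiling functions force a case split by residue, and the resulting rational-function manipulations must be done with care. An equivalent, essentially shorter check is to verify directly that the proposed expressions satisfy the identity $(q-q^{-1})\zeta_t(V_m)=qc_m(t,q)-q^{-1}c_m(t,q^{-1})$ of the Lemma preceding Theorem~4.2 (combined with Proposition~4.1), using the fact that $c_m(t,q)$ is the unique power series in $t,q$ with this property together with having only non-negative powers of $q$.
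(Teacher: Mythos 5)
Your proposal follows exactly the paper's route: apply Corollary~4.4 with $m=3$ starting from $c_1(t)_p=t^p$ and with $m=4$ starting from $c_2(t)_p=t^{p/2}/(1-t^2)$ for even $p$, the paper simply leaving the resulting geometric-series bookkeeping "to the reader"; your intermediate sums $(1+qt+q^2t^2)/(1-t^4)$, $(q^3t+q^2t^2+q^4t^4)/(1-t^4)$, $(1+q^2t)/((1-t^2)(1-t^3))$ and $(q^4t+q^2t^2)/((1-t^2)(1-t^3))$ are correct, and the final expressions do satisfy $(q-q^{-1})\zeta_t(V_m)=qc_m(t,q)-q^{-1}c_m(t,q^{-1})$. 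The closing remark giving the shorter independent check via Lemma~4.3 and positivity of the $q$-powers is a nice sanity check but not a departure from the paper's method.
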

\proof  Corollary~4.4 applied to $c_1(t,q)$ in the form $c_1(t)_p=t^p$ provides $c_3(t,q)$, an exercise which we leave to the reader.  We similarly find $c_4(t,q)$ by Corollary~4.4 from $c_2(t)_{p}=t^{p/2}/(1-t^2)$ for even $p$ and zero for odd $p$. \endproof

We conclude with some remarks about the general form of $c_m(t,q)$. Let
\[ \eta_m(t,q)=\begin{cases}\prod_{i=1}^{m/2}(1-q^{2i}t) &m\ {\rm even}\\ \prod_{i=0}^{(m-1)/2}(1-q^{2i+1}t)&m\ {\rm odd}\end{cases}.\]

\begin{lemma}\label{functeq} Theorem~\ref{conj} is equivalent to $c_m(t,q)$ having the form
\[ c_m(t,q)={g_m(t,q)\over h_m(t)\eta_m(t,q)}\]
where $g_m(t,q)$ and $h_m(t)$ are polynomials with no common factor, have value 1 at $t=0$, and obey
\[   {q g_m(t,q)\eta_m(t,q^{-1})-q^{-1}g_m(t,q^{-1})\eta_m(t,q)\over q-q^{-1}}=h_m(t)\begin{cases}{1\over (1-t)}&m\ {\rm even}\\ 1 & {\rm else}\end{cases}.\] Moreover, $c_m(t,q)$ has rational $t,q$-degrees $-(m+1),-2$ respectively for $m\ge 2$. 
\end{lemma}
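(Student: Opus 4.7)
My plan is to handle the equivalence direction by direction and then the degree count. The \emph{backward direction} is a routine algebraic check: assuming $c_m=g_m/(h_m\eta_m)$ together with the functional equation, substitution into Lemma~4.3 yields
\[ \zeta_t(V_m)=\frac{qg_m(t,q)\eta_m(t,q^{-1})-q^{-1}g_m(t,q^{-1})\eta_m(t,q)}{(q-q^{-1})h_m(t)\eta_m(t,q)\eta_m(t,q^{-1})}. \]
Applying the functional equation to the numerator collapses it, the $h_m(t)$ factor cancels, and one is left with $\zeta_t(V_m)=1/(\eta_m(t,q)\eta_m(t,q^{-1}))$ for $m$ odd and $1/((1-t)\eta_m(t,q)\eta_m(t,q^{-1}))$ for $m$ even. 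In either parity this matches the product of Proposition~\ref{zetacn} at $n=m+1$, giving Theorem~\ref{conj}.

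For the \emph{forward direction} I would start from the equation obtained by combining Theorem~\ref{conj} with Lemma~4.3,
\[ qc_m(t,q)-q^{-1}c_m(t,q^{-1})=\frac{q-q^{-1}}{(1-t)^\epsilon\eta_m(t,q)\eta_m(t,q^{-1})}\qquad (*) \]
(where $\epsilon=1$ for $m$ even and $0$ otherwise), and exploit that $c_m(t,q)$ has only non-negative $q$-powers: its $t^j$-coefficient is a polynomial in $q$ of degree at most $jm$ by the Cayley--Sylvester decomposition. Hence $qc_m(t,q)$ is uniquely recovered as the strictly positive-$q$ part of the right-hand side of~$(*)$. The key task is to show this extraction yields the claimed rational form. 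I would do a partial-fraction decomposition of the RHS of~$(*)$ in the variable $q$: its simple poles lie at $q^i t=1$ for $i\in\pm I_m^+$, with $I_m^+=\{2,4,\ldots,m\}$ or $\{1,3,\ldots,m\}$ according to parity, and they separate cleanly into ``positive-$q$'' poles (roots of $\eta_m(t,q)$) and ``negative-$q$'' poles (roots of $\eta_m(t,q^{-1})$). The positive-$q$ part is then a sum of contributions $A_i(t)/(1-q^i t)$ whose common $q$-denominator is $\eta_m(t,q)$ times a $t$-only factor arising from the residues and the $(1-t)^\epsilon$ prefactor; after reducing against the numerator this $t$-only factor becomes $h_m(t)$. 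The functional equation is then nothing other than $(*)$ rewritten after clearing the denominator $h_m(t)\eta_m(t,q)\eta_m(t,q^{-1})$.

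The degree claim should follow by inspecting this explicit form: $\deg_t(h_m\eta_m)$ exceeds $\deg_t g_m$ by $m+1$, and $\deg_q\eta_m$ exceeds $\deg_q g_m$ by $2$, which are the rational $t,q$-degrees asserted. The main obstacle will be rigorously verifying the partial-fractions extraction---in particular that the residues $A_i(t)$ assemble into a polynomial $g_m(t,q)$ with no leftover $q$-denominator and that their common $t$-denominator is a polynomial rather than a more general rational function---while tracking the $(1-t)^\epsilon$ prefactor carefully in the two parity cases. An inductive alternative via Corollary~\ref{cmfromzeta} is also available, but there too the delicate point is that the $(1-q^{-m}t)$ denominators appearing in the correction terms must telescope into the claimed form.
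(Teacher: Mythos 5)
Your backward direction is fine and is exactly what the paper does: substitute the assumed form of $c_m$ into Lemma~4.3 and watch the functional equation collapse the numerator. The genuine gap is in the forward direction and, consequently, in the degree claim. You reduce everything to a partial-fraction/residue extraction of the strictly positive-$q$ part of $(q-q^{-1})\zeta_t(V_m)$, and then you yourself list as ``the main obstacle'' precisely the assertions that constitute the lemma: that the residues assemble into a \emph{polynomial} $g_m(t,q)$ with common $q$-denominator exactly $\eta_m(t,q)$ and common $t$-denominator a polynomial $h_m(t)$. Nothing in your sketch establishes these. The difficulty is real: the poles of $\zeta_{t,m}$ in $q$ sit at the roots of $q^{m-2s}t=1$, so the individual residues live in $\C(t^{1/(m-2s)})$, and one must sum over all roots of $t$ and invoke symmetry to land back in $\C(t)$ before one can even ask whether the result is of the claimed shape; the paper records the outcome of this computation only as a remark \emph{after} the lemma (the explicit sum over $z^{m-2s}=t$), not as its proof. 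Deferring the degree count to ``inspection of this explicit form'' therefore rests on a form you have not produced.

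The paper's actual argument avoids the extraction entirely. For the forward direction it treats the displayed functional equation as a triangular linear system in the coefficients of $g_m$ (viewed as a polynomial in $q$ with coefficients rational in $t$): working down from the top $q$-degree $\deg_q(g_m)+1$ to degree $q^2$, each equation determines one coefficient, antisymmetry under $q\mapsto q^{-1}$ handles degree $0$ and the negative degrees for free, and one then strips off $t$-only factors, normalises $g_m(0,q)=1$, and \emph{defines} $h_m$ by the equation; uniqueness of the decomposition of $(q-q^{-1})\zeta_t(V_m)$ into strictly positive and strictly negative $q$-parts then identifies $g_m/(h_m\eta_m)$ with $c_m$. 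The degrees likewise come straight from the functional equation rather than from any closed form: since the right-hand side is independent of $q$, comparing top $q$-powers of the two numerator terms forces $\deg_q(g_m)=\deg_q(\eta_m)-2$ (whence $q$-degree $-2$), and for $m\ge 2$ the top-$t$ coefficients of the two numerator terms carry different powers of $q$, so no cancellation occurs for generic $q$ and the rational $t$-degree of $c_m$ equals that of $\zeta_t(V_m)$, namely $-(m+1)$. You should either carry out the residue computation in full (including the resummation over roots of $t$) or switch to this linear-system argument; as written, the forward implication and the degree statement are asserted rather than proved.
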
 
\proof The content of Theorem~\ref{conj} is that $\zeta_t(V_m)=1/(\eta_m(t,q)\eta_m(t,q^{-1}))$ if $m$ is odd, with an extra factor $1/(1-t)$ if $m$ is even. Suppose $c_m(t,q)$ factorises in the form shown. Rearranging Lemma~4.3 now gives the stated equation for  $g_m,h_m$ (and conversely  if this equation holds then $c_m(t,q)$ of the form given implies Theorem~\ref{conj}). Note that the right hand side is independent of $q$. Next, by looking at the top power of $q$ in the numerator and denominator, either $2+\deg(g_m)>\deg(\eta_m)$, in which case the first term in the numerator dominates and $\deg(g_m)=0$ (and $\deg(\eta_m)<2$ which is possible for $m=0,1$), or $2+\deg(g_m)<\deg(\eta_m)$ in which case the second term dominates and $\deg(\eta_m)=2$ and $\deg(g_m)<0$ (a contradiction), or $2+\deg(g_m)=\deg(\eta_m)$. This gives the $q$-degree of $c_m(t,q)$. 
To see the $t$-degree we see from the equation for $g_m,h_m$ that each term of the numerator on the left has degree $\deg_t(g_m)+\deg_t(\eta_m)$. If $m\ge 2$ then by our previous analysis of $q$-degree these terms have different powers of $q$ in the coefficient of the top power of $t$ (the first term has highest power $\le-1$ in $q$ and the 2nd term has lowest power $\ge 1$). Hence there can be no cancellation for generic $q$ and $\deg_t(g_m)+\deg_t(\eta_m)$ is the degree of the numerator. The rational $t$-degree of $c_m(t,q)$ is then as stated, namely, the $t$-degree of $\zeta_t(V_m)$.   It remains to show that the equation shown for $g_m,h_m$ can be solved in the form stated. Indeed, this is a series of linear equations with coefficients rational functions of $t$. Thus, looking at the numerator, start with the top  degree in $q$, which is the coefficient of $q^{\deg(g_m)+1}$. Only the top coefficient in $g_m$ or the constant term in $g_m$ can contribute at this degree, which fixes the former in terms of the latter so that the numerator has nothing in this degree. We then consider the next degree, and so forth down to the coefficient of $q^2$. This solves for $g_m$ up to normalisation as a linear system of equations.  By antisymmetry in the form of the expression under inversion of $q$ we know that there is nothing in degree 0 and that the negative degrees will be solved as well by the same equations. Having solved for $g_m$ with coefficients as a rational function of $t$ we remove any factors that depend on $t$ alone and normalise so that $g_m(0,q)=1$ (say), and we define $h_m(t)$ by the displayed equation. \endproof

In fact by expanding $(q-q^{-1})\zeta_{t,m}(q)$ as a Laurent series in $q$ and using the residue theorem one can extract the positive part as 
\[ c_m(t,q)=q^{-1}\sum_{s=0}^{\lfloor{m-1\over 2}\rfloor}\sum_{z^{m-2s}=t}  {(-1)^s z^{s^2+s-1}\over (1-z^2)^{m-1}(m-2s)(1-qz) [s]_{z^2}![m-s]_{z^2}!},\quad\forall m\ge 2\]
and verify that this has the general form in Lemma~4.6. Here  $[\ ]_{z^2}$ is the notation in Section~2 and the factorials are  products of these. Note that summing over roots of $t$ necessarily produces a rational function by Kummer theory.

\begin{corollary} With reference to the form of $c_m(t,q)$ in Lemma~\ref{functeq}, 
\begin{eqnarray*} g_5(t,q)&=&1 - (q + q^3) t + (q^2 + q^4 + q^6) t^2- q^7 t^3 + q^4 t^4 + 
 (q  + q^3  - q^5) t^5 - (1+ q^4) t^6 \\ && + (2 q  + q^3   + 
 q^5) t^7 - (q^2 + q^4 + 2 q^6) t^8 + (q^3  + q^7) t^9 + 
( q^2  - q^4  - q^6 )t^{10} \\ && - q^3 t^{11} + t^{12} - (q +
 q^3 + q^5) t^{13} + (q^4  + q^6) t^{14} - q^7 t^{15}\\
 h_5(t)&=&(1-t^4)(1-t^6)(1-t^8)\\
 g_6(t,q)&=&1 + (1 - q^2  - q^4) t -( q^2  - q^6  - q^8) t^2 - (1-  q^2 - 
 q^4 - q^6 - q^8+ q^{10}) t^3 \\ && - (1- 2 q^2 -  q^4 +
 q^{10}) t^4 - (1-2 q^2 - q^4 + q^6 + q^8) t^5 \\  && + (q^2 + 
 q^4 - q^6  - 2 q^8  + q^{10}) t^6 + (1 - q^6  - 
 2 q^8 + q^{10}) t^7\\ &&  + (1 - q^2 - q^4  - q^6  - q^8  + 
 q^{10} )t^8 - (q^2 + q^4 - q^8) t^9 + (q^6  + q^8  - 
 q^{10}) t^{10}  - q^{10} t^{11}\\
 h_6(t)&=&(1+t)(1-t^2) (1 - t^3) (1 - t^4) (1 - t^5)
  \end{eqnarray*}
 \end{corollary}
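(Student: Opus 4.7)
The plan is to prove Corollary~4.7 by direct computation, leveraging either the inductive formula in Corollary~4.4 or the residue-theoretic closed form stated just before the corollary, and then verifying that the resulting rational functions match the form demanded by Lemma~4.6.

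First I would compute $c_5(t,q)$ by applying Corollary~4.4 with $m=5$ to the expression for $c_3(t,q)$ obtained in Corollary~4.5. Expanding $c_3(t,q) = (1-qt+q^2t^2)/((1-t^4)(1-qt)(1-q^3t))$ as a power series in $q$ gives an explicit (though moderately intricate) formula for $c_3(t)_p$; substituting this into the two sums in Corollary~4.4 produces a rational function in $(t,q)$. After clearing denominators of the form $(1-q^5t)(1-q^{-5}t)$ and combining with the existing factors from $c_3(t,q)$, the result should collapse—after cancellation of a large number of terms—into the stated form $g_5(t,q)/(h_5(t)\eta_5(t,q))$ with the polynomials $g_5$ and $h_5$ given in the statement. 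The analogous step for $m=6$ starts from $c_4(t,q)$ in Corollary~4.5, using $c_4(t)_p = t^{p/2}/(1-t^2)$ for even $p$ and $0$ for odd $p$ to carry out the $\sum_p$ sums in Corollary~4.4 in closed form.

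A complementary route, which I would use as an independent check, is to apply the residue formula displayed just after Lemma~4.6. For $m=5$ the inner sums run over fifth, cubic, and linear roots of $t$ (for $s=0,1,2$), and the Kummer-theoretic rationalisation of the sum over each Galois orbit must be performed explicitly. For $m=6$ the same procedure applies with sixth, fourth, and square roots of $t$. In each case, once the sum is combined over a common denominator, the factors of $(1-z^2)^{m-1}$ and of $(1-qz)$ (where $z$ runs over the relevant roots) consolidate to give exactly $h_m(t)\eta_m(t,q)$, and the numerator reads off as $g_m(t,q)$.

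Finally I would verify the functional equation
\[
\frac{q\,g_m(t,q)\eta_m(t,q^{-1}) - q^{-1}g_m(t,q^{-1})\eta_m(t,q)}{q-q^{-1}} = h_m(t)\cdot\begin{cases}1/(1-t) & m\text{ even}\\ 1 & m\text{ odd}\end{cases}
\]
for the computed $(g_m,h_m)$, which by Lemma~4.6 is tantamount to confirming $\zeta_t(V_m) = 1/(\eta_m(t,q)\eta_m(t,q^{-1}))$ (times $1/(1-t)$ when $m$ is even) in agreement with Theorem~4.2. This check is straightforward polynomial algebra once $g_m$ and $h_m$ are in hand, and it also certifies that no spurious common factor has been overlooked in the normalisation.

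The main obstacle is neither conceptual nor structural but purely computational: the numerators $g_5$ and $g_6$ each contain more than a dozen monomials in $q$ with coefficients that are polynomials of degree up to $15$ in $t$, so the bookkeeping required to combine the $s=0,1,2$ contributions in the residue formula (or, equivalently, the two sums appearing in Corollary~4.4) without cancellation errors is substantial. For that reason the computation is best done with computer algebra, as the acknowledgements to Thomas Prellberg already indicate, and one should regard Lemma~4.6 as giving the $q$- and $t$-degree bounds that make the search for $g_m,h_m$ a finite problem, with the functional equation above then pinning them down uniquely up to the normalisation $g_m(0,q)=h_m(0)=1$.
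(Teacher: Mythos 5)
Your proposal is correct and matches the route the paper itself signposts: the paper gives no explicit proof of this corollary, but the surrounding text (the residue formula displayed after Lemma~4.6, the inductive formula of Corollary~4.4, and the remark about independent verification with LiE) makes clear that the intended argument is exactly the computer-assisted computation you describe, followed by the check against the functional equation and degree bounds of Lemma~4.6.
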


We have independently verified all formulae for $c_m(t,q)$ to $m\le 6$ and to $t^9$ using the package LiE for the decomposition of $S^j(V_m)$. We note that the algorithm used by LiE (which does not lead to closed formulae) is based on Adams operations $\psi^j$. We recall that formally extending the direct sum and tensor products of representations
\[ S_t(V)= \sum_{j=0}^\infty t^j S^j(V)= e^{\sum_{j=1}^\infty {t^j\over j}\psi^j(V)}\]
which suggests an interpretation of the classical Hasse-Weil form of the $\zeta$-function with $\psi^j(V)$ providing $X(\F_{q^j})$ in a suitable category. 

Finally, the above results are to $\zeta_t(V)$ for $V$ irreducible as an object in our braided category. But because tensor products and direct sums under $q$-deformaton have the same structure up to isomorphism, property (\ref{lring}) remains true. Hence $\zeta_t$ of a direct sum module is the product of the $\zeta_t$ of each part. This means that we are in position to compute the braided $\zeta_t$ of the $q$-sphere, formally at least,  using Theorem~\ref{conj} under the assumption of generic $q$.  Namely we use $C_q[S^2]=\oplus_{s=0}^\infty V_{2s}$ as explained in Section~3, then formally
\[ \zeta_t(C_q[S^2])={1\over 1-t}\prod_{s=1}^\infty \zeta_t(V_{2s})=({1\over 1-t})^\infty\prod_{s=1}^\infty \prod_{i=1}^s{1\over (1-q^{2i}t)(1-q^{-2i}t)}.\]
Clearly we could remove the unquantized classical factor $({1\over 1-t})^\infty$ corresponding to the weight zero eigenspace of each component $V_{2s}$. It should be remembered, however, that this is a formal expression. For small powers of $t$ we can compute the coefficients directly for additional insight. 

\begin{proposition} For generic $q$, and provided summations over irreducibles are taken in a certain order and summands combined, 
\begin{eqnarray*}\zeta_t(C_q[S^2])&=&1- 2{ t \over(q-q^{-1})^2}+{4\over (2)_q^2}\left({t\over (q-q^{-1})^2}\right)^2+{2 ((4)_q^2-4)\over (2)_q^2(3)_q^2}\left({t\over(q-q^{-1})^2}\right)^3 +O(t^4)\end{eqnarray*}
\end{proposition}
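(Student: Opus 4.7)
My plan is to use property (\ref{lring}) together with the multiplicativity of $\und\dim'$ on tensor products, which gives $\zeta_t(A\oplus B)=\zeta_t(A)\zeta_t(B)$ for direct sums of rigid objects in the ribbon category. Applied inductively to the $U_q(sl_2)$-module decomposition $C_q[S^2]=\bigoplus_{s\ge 0}V_{2s}$ recalled in Section~3, and combined with Theorem~\ref{conj} applied to each $V_{2s}=\C^{2s+1}$, this gives the formal expression
$$\zeta_t(C_q[S^2])=\prod_{s\ge 0}\zeta_t(V_{2s})=\prod_{s\ge 0}\prod_{i=-s}^{s}\frac{1}{1-q^{2i}t}.$$
The goal is then to extract the coefficient of $t^n$ for $n=0,1,2,3$. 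Expanding each factor as $\zeta_t(V_{2s})=\sum_{j\ge 0}\und\dim'(S^j(V_{2s}))\,t^j$, the coefficient of $t^n$ in the product is
$$[t^n]\zeta_t(C_q[S^2])=\sum_{(j_s)_{s\ge 0},\ \sum_s j_s=n}\ \prod_s\und\dim'\bigl(S^{j_s}(V_{2s})\bigr),$$
an infinite sum of rational expressions in $q^{\pm 1}$.

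The key move, and the meaning of the ``certain order'' qualification in the statement, is to split every symmetric $q$-integer $(m)_q=(q^m-q^{-m})/(q-q^{-1})$ appearing in the summand into its positive and negative $q$-monomial parts \emph{before} carrying out the summation over $s$, and then sum each monomial contribution using the formal geometric series identity $\sum_{s\ge 0}q^{2ks}=1/(1-q^{2k})$ for $k\ne 0$. The excerpt already executes this for $n=1$ to obtain $\sum_{s\ge 0}(2s+1)_q=-2/(q-q^{-1})^2$. For $n=2$ the sum splits as $\sum_s\und\dim'(S^2(V_{2s}))+\sum_{s<s'}(2s+1)_q(2s'+1)_q$, the first piece evaluated using the Cayley--Sylvester decomposition $S^2(V_{2s})=\bigoplus_{k=0}^{s}V_{4k}$ and the second via the standard symmetric-function identity $\sum_{s<s'}a_sa_{s'}=\tfrac12\bigl((\sum_s a_s)^2-\sum_s a_s^2\bigr)$. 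For $n=3$ the same procedure applies to three types of partition (one-index, two-index of type $(2,1)$, and fully-distinct three-index), each reducing to combinations of the geometric sums $\sum_{s\ge 0}q^{2ks}$ for $k\in\{\pm 1,\pm 2,\pm 3\}$, which are then algebraically collapsed using the identities $(4)_q=(2)_q(2)_{q^2}$ and the relations between $(2)_q,(3)_q,(4)_q$ to give the stated compact form.

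The principal obstacle is the treatment of the $q^0$-weighted (``unquantized'') contributions that arise whenever the positive and negative $q$-powers in a summand cancel, e.g.\ from expansions like $(2s+1)_q^2=(q^{4s+2}-2+q^{-(4s+2)})/(q-q^{-1})^2$, where the middle term produces a formally divergent $-2\sum_{s\ge 0}1$. Different regularizations of these constant sums give genuinely different rational functions of $q$, and the prescription implicit in the statement -- consistent with the author's remark that one ``could remove the unquantized classical factor $(1/(1-t))^\infty$'' -- is to assign $\sum_{s\ge 0}q^{0\cdot s}\equiv 0$ throughout, so that only the $k\ne 0$ geometric series contribute. Verifying that this prescription is applied consistently at each order, and that the resulting combinations indeed collapse into the stated compact form involving only $(2)_q$, $(3)_q$, $(4)_q$ and $(q-q^{-1})$, is the technical content of the proof through degree $t^3$.
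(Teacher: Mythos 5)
Your high-level setup is the same as the paper's: expand $S^n(\oplus_s V_{2s})$ into the pieces indexed by which exponents $j_s$ are nonzero, use $S^2(V_{2s})=\oplus_{i=0}^s V_{4i}$ and Cayley--Sylvester (or $S^3(V_{2s})=S^{2s}(V_3)$) for the diagonal pieces, and evaluate the resulting divergent $s$-sums by formal geometric series. The gap is in your treatment of the off-diagonal pieces via power sums. The identity $\sum_{s<s'}a_sa_{s'}=\tfrac12\bigl((\sum_s a_s)^2-\sum_s a_s^2\bigr)$ is \emph{not} compatible with this regularization, and it changes the finite part of the answer. Concretely, take $a_s=(2s+1)_q$ and write $D=(q-q^{-1})^2$. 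Your route gives $\sum_s a_s=-2/D$ and, with your rule $\sum_s 1\equiv 0$, $\sum_s a_s^2=0$, hence $\sum_{s<s'}a_sa_{s'}=2/D^2$. Evaluating the constrained double sum directly instead (substitute $s'=s+k$ with $k\ge 1$, expand into monomials $q^{\pm(4s+2k+2)}$ and $q^{\pm 2k}$, and sum each geometric series formally) gives $1/D^2$. Adding the diagonal piece $\sum_s\und\dim'(S^2(V_{2s}))=-1/(D\,(2)_q^2)$, your prescription yields $2/D^2-1/(D\,(2)_q^2)=((2)_q^2+4)/(D^2(2)_q^2)$ for the $t^2$ coefficient, whereas the second evaluation yields $1/D^2-1/(D\,(2)_q^2)=4/(D^2(2)_q^2)$, which is the stated coefficient (using $(2)_q^2-(q-q^{-1})^2=4$). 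So your scheme already fails to reproduce the Proposition at order $t^2$, and the degree-three Newton identity will fail in the same way at order $t^3$.

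The ``certain order'' in the statement is precisely the iterated/merged evaluation that avoids this: write $\sum_{s<s'}a_sa_{s'}=\sum_s a_s\bigl(\sum_{s'>s}a_{s'}\bigr)$ with the inner tail computed exactly as a rational function of $q^{\pm s}$, and then merge the diagonal and off-diagonal summands into a single sum over $s$. The $s$-independent terms then cancel between the two pieces (here $-1/D$ per summand against $+1/D$ per summand), so no ad hoc assignment $\sum_s 1\equiv 0$ is needed, and the surviving geometric series give the stated rational functions. If you replace your symmetric-function identities by this constrained-sum evaluation (and do the analogous merging for the three partition types at order $t^3$, where the paper also uses the shortcut $\und\dim'(\oplus_s S^{2s}(V_3))=\tfrac12(\zeta_1(V_3)+\zeta_{-1}(V_3))$ for the diagonal cubic piece), the rest of your outline goes through.
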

\proof The $t^1$ coefficient is $\und\dim'(C_q[S^2])$ as in Section~3. For the $t^2$ coefficient  we have classically
\[ S^2(C(S^2))=S^2(\oplus_{s=0}^{\infty}V_{2s})=(\oplus_{s=0}^\infty S^2(V_{2s}))\oplus(\oplus_{s<s'}(V_{2s}\tens V_{2s'}))\]
and $S^2(V_{2s})=S^{2s}(V_2)=\oplus_{i=0}^sV_{4i}$ as noted above. Taking the same decompositions in the $q$-deformed case and using multiplicativity of $\und\dim'$ we compute
\begin{eqnarray*}  \und\dim'(S^2(C_q[S^2]))&=&\sum_{s=0}^\infty\sum_{i=0}^s(4i+1)_q+\sum_{0\le s<s'\le \infty}(2s+1)_q(2s'
+1)_q\\
&&={4\over (1-q^2)(1-q^{-2})(1-q^4)(1-q^{-4})}.\end{eqnarray*}
Note that each sum here is infinite but when the summands over $s$ are combined one obtains the result stated. Similarly, for $t^3$ we have
\[ S^3(C(S^2))=\left(\oplus_{s=0}^\infty S^3(V_{2s})\right)\oplus\left(\oplus_{s=0}^\infty (S^2(V_{2s})\tens(\oplus_{s'\ne s} V_{2s'}))\right)\oplus\left(\oplus_{s''< s< s'}(V_{2s''}\tens V_{2s}\tens V_{2s'})\right)\]
as $S^3(\oplus_{s=0}^\infty V_{2s})$. We decompose $S^3(V_{2s})=S^{2s}(V_3)$ either by the decomposition provided by  Corollary~\ref{c3c4} or, more easily, we compute the braided dimension directly from $\zeta_{\pm 1}(V_3)$. This works for all odd $m$:
\[ \und\dim'(\oplus_{s=0}^\infty S^m(V_{2s}))=\und\dim'(\oplus_{s=0}^\infty S^{2s}(V_m))=({\rm even\ part\ }\zeta_t(V_m))_{t=1}={1\over 2}(\zeta_1(V_m)+\zeta_{-1}(V_m))\]
and in the present case, using Theorem~\ref{conj} for $m=3$, gives
\[ {q^{-4}+q^{-2}+4+q^2+q^4\over (1-q^2)(1-q^{-2})(1-q^6)(1-q^{-6})}.\]
For the remaining direct sums we decompose $S^2(V_{2s})$ as before and apply $\und\dim'$ to give 
\[ \sum_{s=0}^\infty(\sum_{i=0}^s(4i+1)_q)(\sum_{s'=0, s'\ne s}^\infty(2s'+1)_q)+ \sum_{s=1}^\infty \sum_{s''=0}^{s-1}\sum_{s'=s+1}^\infty (2s''+1)_q(2s+1)_q(2s'+1)_q.\]
The two sums do not separately converge but merging the summands to a single sum over $s$ (with no contribution from  $s=0$ in the second expression), the combined sum then converges. We add this to our result from $\zeta_{\pm 1}$, express the denominators in terms of $q$-integers and group the poles as a rescaling of $t$. It should be noted that many different answers can be obtained when combining divergent series to convergent ones; the above is just one such `minimal prescription' in the absence of a general framework.  \endproof

We see that at least to low degree the coefficients of $\zeta_t(C_q[S^2])$ are rational functions in $q$, but this does depend on taking sums over irreducibles in the right manner. It also appears that the $\zeta$-function may  be expressible as a `renormalised'  function $\bar\zeta(t/(q-q^{-1})^2)$ with $\bar\zeta$ now having a classical limit as $q\to 1$. This would be reminiscent of the process of renormalisation in quantum field theory, but again will depend on the  scheme used. If we could eliminate the $(4)_q^2$ in one of the terms then we would have, for example, the leading terms of a $q$-Bessel function. A systematic treatment of these issues would be a topic for further work.

 \section{Braided Hilbert series}
 
Going back to the finite set case in Section~2, an alternative approach to $A^{(j)}$ there is as the degree $j$ part of the symmetric algebra $S(A)$. In the strictly braided case this will normally be different from the braided zeta function but in the $q$-deformed case for generic $q$ it coincides and offers a different point of view.

Let $A$ be an object in an Abelian braided category. One can define a canonical `braided-symmetric algebra'
\begin{equation}\label{braidedsym}BS(A)=T A/\oplus_j \ker({\CS}_j),\quad {\CS}_j=\sum_{\sigma\in S_j}\Psi_{i_1}\cdots\Psi_{i_{l(\sigma)}}\end{equation}
where $\sigma=s_{i_1}\cdots s_{i_{l(\sigma)}}$ is a reduced expression in terms of simple (adjacent) reflections and $\Psi_i$ denotes $\Psi$ acting similarly in the $i,i+1$ tensor factors. Constructions of this type have been used (in an antisymmetric variant) for left-invariant differential forms on quantum groups\cite{Wor}. This $BS(A)$ is a `braided group'\cite{Ma:book} or Hopf algebra in the braided category, with respect to coaddition, and in the finite-dimensional (rigid) case it arises naturally as the coradical of the pairing between braided tensor Hopf algebras $TA$ and $TA^*$ \cite{Ma:dbos,Ma:difdou,Ma:perm}. In the ribbon case we define
\begin{equation}\label{braidedHilbert}H_t(A)=\sum_{j=0}^\infty t^j \und\dim'(BS(A)^j).\end{equation}
In the $q$-deformed setting with generic $q$ both $A^{(j)}$ and $BS(A)^j$ deform the same classical representation and hence have the same multiplicities as modules for the relevant quantum group. Hence their braided dimensions coincide.
 
We conclude with a complementary class of examples, now  in the ribbon braided category of $D(G)$-modules where $D(G)$ is the quantum double of a classical finite group $G$. The ribbon structure cancels with the braiding in such a way that $\und\dim'=\dim$ for any object (in the notation of \cite{Ma:book} the special elements $u,v,\nu$ all coincide). 
As objects we take $X\subset G$ a nontrivial conjugacy class $A=\C X$. This carries an action of the quantum double and the induced braiding is
\[ \Psi(x\tens y)=xyx^{-1}\tens x,\quad \forall x,y\in X.\]

It has been explained in \cite{Ma:perm} that the Fomin-Kirillov quadratic algebra\cite{FomKir} for the flag variety of type $A_{n-1}$ can be understood as a braided symmetric algebra $BS^{\rm quad}(A)$ (where we take only the quadratic relations) and it was proposed that $BS(A)$ generalises this construction to general Lie type. Here $G=S_n$ the symmetric group and $X=X_n$ is the conjugacy class of 2-cycles and  $|X_n|=\left({n\atop 2}\right)$.  At least for small $n$ the relations of $BS(A)$ are only the quadratic ones and the dimensions of each degree  are also known for small $n$ and vanish after some top degree. The pattern of top degrees is conjectured to be related to the structure of the Lusztig canonical bases for $A_{n-1}$\cite{Ma:perm}. The Hilbert series for small $n$ are quoted in \cite{FomKir} as
\[ H_t(\C X_2)=[2]_t\]
\[  H_t(\C X_3)=[2]_t^2[3]_t\]
\[ H_t(\C X_4)=[2]_t^2[3]_t^2[4]_t^2 \]
\[ H_t(\C X_5)=[4]_t^4[5]_t^2[6]_t^4.\]
Comparing these results with the classical zeta-function on finite sets in Section~2, we see that they resemble the $\zeta$-functions of collections of points with different amounts of `regularity' in the sense of contributing different factors  $[m]_t$ with $2\le m<\infty$. 

Clearly, $X$ above is a special case of the notion of a `braided set', i.e. a set equipped with a map $X\times X\to X\times X$ obeying the braid relations\cite{IvaMa}; the computation of $H_t$ for other braided sets is a direction for further work.

\end{document}